\title{\LARGE\bf Extremum Seeking is Stable for Scalar Maps\\ that are Strictly  but Not  Strongly Convex}
\author{Patrick McNamee, Miroslav Krsti\'c, and Zahra Nili Ahmadabadi
\thanks{P. McNamee and Z. N. Ahmadabadi are with the  Department of Mechanical Engineering,
        San Diego State University, 
        San Diego, CA, USA
        {\tt\small pmcnamee5123@sdsu.edu} and 
        {\tt\small zniliahmadabadi@sdsu.edu}}
\thanks{M. Krsti\'c with the Department of Mechanical and Aerospace Engineering,
        University of California San Diego,
        La Jolla, CA, USA
        {\tt\small krstic@ucsd.edu}}
\thanks{Thus work was partially funded by Department of the Navy, Office of Naval Research under ONR award number N000142412269. Any opinions, findings, and conclusions or recommendations expressed in this material are those of the author(s) and do not necessarily reflect the views of the Office of Naval Research.}%
}
\newcommand{\average}[1]{\overline{#1}}
\newcommand{\avgest}[1]{\average{#1}}
\newcommand{\avgerror}[1]{\error{\average{#1}}}
\newcommand{\comment}[1]{}
\newcommand{\costfunction}{J}
\newcommand{\error}[1]{\Tilde{#1}}
\newcommand{\estimate}[1]{\hat{#1}}
\newcommand{\gain}{k}
\newcommand{\gradient}{G}
\newcommand{\gradientfilter}{M}
\newcommand{\hessian}{H}
\newcommand{\hessianfilter}{N}
\newcommand{\invhessian}{{\Gamma}}
	\newcommand{\loginvhessian}{\gamma}
\newcommand{\limitvalue}[1]{#1^{*}}
\newcommand{\lyapunov}{V}
\newcommand{\optimal}[1]{{#1}^{*}}
\newcommand{\sensoroutput}{y}
\newcommand{\parameter}{\theta}
\newcommand{\realnumbers}{\mathbb{R}}
\newcommand{\set}[1]{\mathcal{#1}}
\DeclareMathOperator{\sgn}{sgn}
\newcommand{\stationary}[1]{#1_{*}}
\newcommand{\domain}{\set{D}}
\newtheorem{thm}{Theorem}
\newtheorem{lem}[thm]{Lemma}
\newtheorem{prop}[thm]{Proposition}
\newtheorem{asmp}{Assumption}
\newtheorem{defn}{Definition}
\tikzset{
  block/.style    = {draw, thick, rectangle, minimum height = 3em,
    minimum width = 3em},
  sum/.style      = {draw, circle},
  input/.style    = {coordinate},
  output/.style   = {coordinate},
  gain/.style = {
  	draw, 
    isosceles triangle,
    isosceles triangle apex angle=50,
    minimum height = 3.0em,
    outer sep=0},
}
\newcommand{\convolve}{\Huge$\otimes$}
\newcommand{\suma}{\Large$\bigoplus$}
\begin{document}

\maketitle

\begin{abstract}
For a map that is strictly but not strongly convex, model-based gradient extremum seeking has an eigenvalue of zero at the extremum, i.e., it fails at exponential convergence. Interestingly, perturbation-based model-free extremum seeking has a negative Jacobian, in the average, meaning that its (practical) convergence is exponential, even though the map's Hessian is zero at the extremum. While these observations for the gradient algorithm are not trivial, we focus in this paper on an even more nontrivial study of the same phenomenon for Newton-based extremum seeking control (NESC). 

NESC is a second-order method which corrects for the unknown Hessian of the unknown map, not only in order to speed up parameter convergence, but also (1) to make the convergence rate user-assignable in spite of the unknown Hessian, and (2) to equalize the convergence rates in different directions for multivariable maps. Previous NESC work established stability only for maps whose Hessians are strictly positive definite everywhere, so the Hessian is invertible everywhere. For a scalar map, we establish the rather unexpected property that, even when the map behind is strictly convex but not strongly convex, i.e., when the Hessian may be zero, NESC guarantees practical asymptotic stability, semiglobally. While a model-based Newton-based algorithm would run into non-invertibility of the Hessian, the perturbation-based NESC, surprisingly, avoids this challenge by leveraging the fact that the average of the perturbation-based Hessian estimate is always positive, even though the actual Hessian may be zero.
\end{abstract}

\section{Introduction}

Extremum seeking control (ESC) is form of parameter optimization for continuous time systems first proposed in the 1920s and popularized starting in the 2000s \cite{ref:tan-2010}. ESC attempts to optimize some map $\costfunction$ of continuous parameters $\parameter$, either scalars or vectors, to some real value sensor output $\sensoroutput$. There are numerous applications for ESC such as acoustic source seeking, adaptive control of combustion instabilities \cite{ref:banaszuk-2000}, fly wheel control \cite{ref:walsh-2000}, and model-free stabilization \cite{ref:scheinker-2016}. One of the advantages of ESCs is that they are designed for maps which are {\itshape a priori} unknown at system deployment. This differs from other optimization based continuous time dynamical systems which either evolve by vector fields that are the step size limits of difference equations \cite{ref:su-2014} or assumed that the derivatives of the function are known such as \cite{ref:nesic-2012}. ESC accomplishes this by both locally exploring the parameter space to estimate local map information as well as simultaneously updating the estimate of the optimal parameters.
	
As ESC simultaneously explores the local parameter space and updates its optimal parameter estimate, proofs of ESC stability tend to rely on a representative system which has a simpler analysis and shadows the full system. Two common techniques for constructing these representative systems are direct Averaging theory \cite{ref:ghaffari-2012} and Lie Bracket Analysis \cite{ref:durr-2013,ref:labar-2019,ref:todorovski-2022}. Gradient-based extremum seeking control (GESC) use information on the gradient of the map to determine their parameter updates. The convergence rate for GESC around an extremum point $\optimal{\parameter}$ is dependent on the magnitude of eigenvalues of the Hessian \cite{ref:ghaffari-2012}. This convergence rate may diminish for small eigenvalues, leading to unacceptably slow convergence rate. This rate dependence on the unknown Hessian is the driving motivation behind the NESC algorithm. The NESC corrects the convergence rate by multiplying the gradient estimate vector with an estimated inverse Hessian matrix to achieve a user assigned, local convergence rate that is completely independent of the unknown Hessian. The first proposed NESC is a scalar version in \cite{ref:moase-2009} while the first multivariate case is given in \cite{ref:ghaffari-2012}. Both of these works assume that the Hessian of the map was positive definite in the minimization problem although this assumption is frequent in previous works \cite{ref:yin-2018, ref:labar-2019}. The reason behind this assumption was that stability proofs rely on average system estimates converging to their true values as the local exploration amplitude diminishes and the exploration rate increases. Thus a positive definite Hessian generates a locally quadratic map that is locally strongly convex with local practical exponential stability to the extremum point. 

Reference \cite{ref:labar-2019} was the first to give a NESC that is sGPUAS on maps with invertible Hessians, but the local stability of this NESC around the global minimum was not user assignable, i.e., the local stability depends on the eigenvalues of the unknown Hessian rather than solely parameters chosen by a system designer. An improved NESC formulation in \cite{ref:todorovski-2023} corrects this, allowing the NESC to still be sGPUAS while allowing the local stability to be fully assignable. Both \cite{ref:labar-2019} and \cite{ref:todorovski-2023} make the assumption that the Hessian of the map was always invertible so it may be surprising that the perturbation-based scalar NESC avoids this challenge. The challenge is overcome by leveraging the fact that the average of the perturbation-based Hessian estimate is always positive as given in Prop.~\ref{prop:average-hessian-strictly-positive}, even though the actual Hessian may be zero.
	
{\itshape Main Contribution:} The main contribution of this work is to show the relaxation of the strict requirement $\costfunction''~>~0$ when considering NESC practical stability for static scalar maps. For scalar maps which are strictly convex but not necessarily strongly convex, the NESC algorithm is shown to be sGPUAS whenever there exists a global minimizer.

\section{Perturbations Remove Strong Convexity Restriction in Gradient ESC}

\begin{figure}[t]
    \centering
    \resizebox{\columnwidth}{!}{
        \begin{tikzpicture}
    \draw
        node[block] at (0,0) (costfunction) {$\costfunction(\parameter)$}
        node[below right=of costfunction] (gradientfilter) {\convolve}
        node[right=0.75cm of gradientfilter] (M) {$M(t)$}
        node[gain, left=3cm of gradientfilter, rotate=180] (gain) {\rotatebox{-180}{$\frac{\gain}{s}$}}
        node[left=2cm of gain] (perturbation) {\suma}
        node[below=0.5cm of perturbation] (S) {$S(t)$}
        ;

    \draw[-latex] (costfunction.east) --++(3,0) node[anchor=west] {$\sensoroutput$};
    \draw[-latex, shorten >=-0.15cm] (costfunction.east) -| (gradientfilter.north);
    \draw[-latex, shorten >=-0.19cm] (M.west) -- (gradientfilter.east);
    \draw[-latex, shorten >=-0.16cm] (gain.east) -- node[anchor=south] {$\estimate{\parameter}$} (perturbation.east);
    \draw[-latex, shorten <=-0.15cm] (perturbation.north) |- node[pos=0.75, anchor=south] {$\parameter$} (costfunction.west);
    \draw[-latex, shorten <=-0.19cm] (gradientfilter.west) -- node[anchor=south] {$\estimate{\gradient}$}(gain.west);
    \draw[-latex, shorten >=-0.15cm] (S.north) -- (perturbation.south);
\end{tikzpicture}
    }%
    \caption{Scalar GESC Algorithm}
    \label{fig:gesc-diagram}
\end{figure}
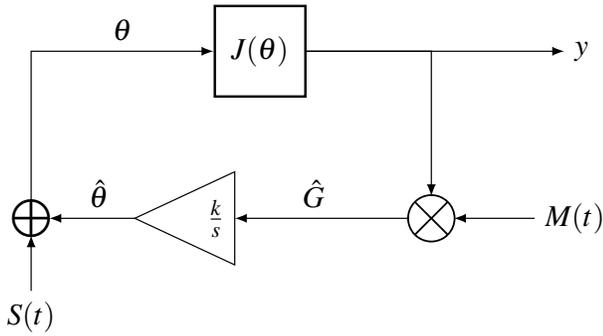

Consider the model-based GESC
\begin{equation}
    \label{eq:gesc-model-based}
    \dot{\estimate{\parameter}} = -\gain J'(\estimate{\parameter})
\end{equation}
seeking to minimize $\costfunction: \realnumbers\to\realnumbers$ where $\estimate{\parameter}$ is the estimate of the optimal parameter $\optimal{\parameter}$ and $\gain>0$ is a gain. The Jacobian of \eqref{eq:gesc-model-based} around an equilibrium $\stationary{\parameter}$ is $-\gain J''(\optimal{\parameter})$ which may be zero for strictly convex maps. 

A model-free GESC algorithm updates its parameter estimate using the first order differential equation
\begin{equation}
    \label{eq:gesc-parameter-update-ode}
    \dot{\estimate{\parameter}} = -\gain \estimate{\gradient}
\end{equation}
where $\estimate{\gradient}$ is the perturbation-based estimate of the gradient $\costfunction'$. This gradient estimate is $\estimate{\gradient} = \gradientfilter(t) \sensoroutput$ where $\sensoroutput$ is a perturbed sensor output and $\gradientfilter(t)$ a time varying filter. The perturbed sensor output is $\sensoroutput = \costfunction(\estimate{\parameter}(t)+S(t))$ were $S(t) = a\sin(\omega t)$ with $a\neq 0$ and $\omega>0$ while $M(t) = 2\sin(\omega t)/a$. The average system associated with \eqref{eq:gesc-parameter-update-ode} is
\begin{equation}
    \label{eq:gesc-average-parameter-update-ode}
    \dot{\average{\parameter}} = -\gain \average{\gradient}(\average{\parameter})
\end{equation}
where $\average{\gradient}$ is the averaged gradient estimate defined as
\begin{equation}
	\label{eq:average-gradient-estimate}
	\avgest{\gradient}(\avgest{\parameter}) = \frac{1}{a\pi}\int_{0}^{2\pi} \sin(\tau)\costfunction(\avgest{\parameter} + a\sin\tau)d\tau.
\end{equation}
The Jacobian of \eqref{eq:gesc-average-parameter-update-ode} around an equilibrium point $\stationary{\parameter}$ is $-\gain\average{\gradient}'(\stationary{\parameter})$, which in contrast to the model-based GESC is always strictly positive for strictly convex maps by Prop.~\ref{prop:average-gradient-estimate-strictly-monotonically-increasing}. The model-free perturbation-based GESC retains local exponential stability around equilibrium points on strictly convex maps.

\section{Newton ESC and Its Average}

\begin{figure}[tb]
    \centering
    \resizebox{\columnwidth}{!}{
        \begin{tikzpicture}
    \draw
        node[block] at (0,0) (costfunction) {$\costfunction(\parameter)$}
        node[block, below=2cm of costfunction, left of=costfunction] (directionestimate) {$-\estimate{\invhessian}\estimate{\gradient}$}
        node[right=2cm of directionestimate] (gradientfilter) {\convolve}
        node[right=0.75cm of gradientfilter] (M) {$M(t)$}
        node[gain, left=3cm of directionestimate, rotate=180] (gain) {\rotatebox{-180}{$\frac{\gain}{s}$}}
        node[left=2cm of gain] (perturbation) {\suma}
        node[below=0.5cm of perturbation] (S) {$S(t)$}
        node[block, below=1cm of directionestimate, xshift=3cm] (inversehessianode) {$\frac{d}{d t}\estimate{\invhessian} = \omega_l \estimate{\invhessian}\left(1 -  \estimate{\invhessian}\estimate{\hessian}\right)$}
        node[right=1cm of inversehessianode] (hessianfilter) {\convolve}
        node[right=0.75cm of hessianfilter] (N) {$N(t)$};

    \draw[-latex] (costfunction.east) --++(5,0) node[anchor=west] {$\sensoroutput$};
    \draw[-latex, shorten >=-0.15cm] (costfunction.east) -| (gradientfilter.north);
    \draw[-latex, shorten >=-0.15cm] (costfunction.east) -| (hessianfilter.north);
    \draw[-latex, shorten >=-0.19cm] (M.west) -- (gradientfilter.east);
    \draw[-latex, shorten >=-0.19cm] (N.west) -- (hessianfilter.east);
    \draw[-latex, shorten >=-0.15cm] (S.north) -- (perturbation.south);
    \draw[-latex, shorten <=-0.19cm] (hessianfilter.west) -- node[anchor=south] {$\estimate{\hessian}$} (inversehessianode.east);
    \draw[-latex] (inversehessianode.west) -| node[anchor=south, pos=0.3] {$\estimate{\invhessian}$} (directionestimate.south);
    \draw[-latex] (directionestimate.west) -- (gain.west);
    \draw[-latex, shorten >=-0.16cm] (gain.east) -- node[anchor=south] {$\estimate{\parameter}$} (perturbation.east);
    \draw[-latex, shorten <=-0.15cm] (perturbation.north) |- node[pos=0.75, anchor=south] {$\parameter$} (costfunction.west);
    \draw[-latex, shorten <=-0.19cm] (gradientfilter.west) -- node[anchor=south] {$\estimate{\gradient}$}(directionestimate.east);
\end{tikzpicture}
    }%
    \caption{Scalar NESC Algorithm}
    \label{fig:nesc-diagram}
\end{figure}
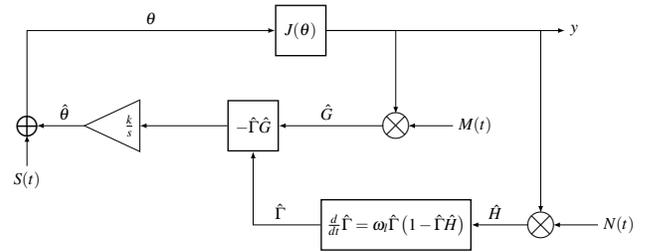

An improvement to gradient-based methods is to account for the second derivative of the map to compensate for map curvature. The system differential equations for a scalar NESC \cite{ref:ghaffari-2012} are
\begin{eqnarray}
    \label{eq:parameter-estimate-ode}
    \dot{\estimate{\parameter}} &=& -\gain\estimate{\invhessian}\estimate{\gradient} \\
    \label{eq:invhessian-estimate-ode}
    \dot{\estimate{\invhessian}} &=& \omega_l \estimate{\invhessian}\left(1 - \estimate{\invhessian}\estimate{\hessian}\right)
\end{eqnarray}
where $\omega_l > 0$ is a filter gain, $\estimate{\hessian}$ is the estimate of the Hessian, and $\estimate{\invhessian}$ is the Ricatti based low pass filter formulation of the inverse of $\estimate{\hessian}$. The domain for this system is the half-plane $\domain=\left\lbrace(\estimate{\parameter},\ \estimate{\invhessian})\ \vert\ \estimate{\invhessian} > 0\ \right\rbrace$ since the unstable equilibrium at $\estimate{\invhessian} = 0$ in Eq.~\eqref{eq:invhessian-estimate-ode} prevents $\estimate{\invhessian}$ from changing sign \cite{ref:ghaffari-2012}. Another time varying signal $\hessianfilter(t) = \frac{16}{a^2}\left(\sin^2(\omega t) - \frac{1}{2}\right)$ is used to estimate the Hessian $\estimate{\hessian}=\hessianfilter\sensoroutput$ from the sensor output $\sensoroutput$. The average system for the model-free NESC is
\begin{eqnarray}
    \label{eq:average-system-parameter-update-ode}
	\dot{\average{\parameter}} &=& -\gain \average{\invhessian}\cdot\average{\gradient}(\average{\parameter})\\
	\label{eq:average-system-invhessian-ode}
	\dot{\average{\invhessian}} &=& \omega_l \average{\invhessian}\left(1 -\average{\invhessian}\cdot\avgest{\hessian}(\average{\parameter})\right)
\end{eqnarray}
where $\average{\hessian}$ is the averaged Hessian estimate defined as
\begin{equation}
    \label{eq:average-hessian-estimate}
    \avgest{\hessian}(\estimate{\parameter}) = \frac{8}{a^2\pi} \int_{0}^{2\pi}\left(\sin^2(\tau) - \frac{1}{2}\right)\costfunction(\estimate{\parameter} + a\sin\tau)d\tau.
\end{equation}
Equilibrium points of \eqref{eq:average-system-parameter-update-ode} are the same as \eqref{eq:gesc-average-parameter-update-ode}, but now there is the Hessian and averaged Hessian estimate to consider. For strictly convex functions such as quartic functions, the Hessian can be zero so \eqref{eq:average-system-parameter-update-ode} with model-based information $\costfunction''$ is exponential unstable in $\estimate{\invhessian}$ whenever the gradient and Hessian of the map is zero. In stark contrast, the average the perturbation-based Hessian estimate is strictly positive for strictly convex functions by Prop.~\ref{prop:average-hessian-strictly-positive}, preventing this instability.

\section{Problem Formulation and Assumptions}
	\label{sec:problem-formulation-and-assumptions}

We considers the maps under the following three assumptions.
 
\begin{asmp}
    \label{asmp:twice-differentiable}
    The map $\costfunction$ is a twice differentiable function.
\end{asmp}
\begin{asmp}
    \label{asmp:strictly-convex}
    The map $\costfunction$ is a strictly convex function, that is for any distinct pairs of $\parameter_1,\ \parameter_2 \in \realnumbers$ and $\forall \lambda \in (0,1)$ the following inequality holds over the convex line segment \cite{ref:boyd-2004}.
	\begin{equation}
		\label{eq:strict-convexity}
		\costfunction(\lambda \parameter_1 + (1-\lambda)\parameter_2) < \lambda \costfunction(\parameter_1) + (1-\lambda)\costfunction(\parameter_2)
	\end{equation}
\end{asmp}
\begin{asmp}
    \label{asmp:global-minimizer}
    The map $\costfunction$ has a global minimizer $\optimal{\parameter}$ where:
    \begin{enumerate}
        \item $\costfunction(\optimal{\parameter}) < \costfunction(\parameter)$ for all $\parameter\neq\optimal{\parameter}$;
        \item $\costfunction'(\optimal{\parameter}) = 0$;
        \item and $\costfunction'(\parameter) \neq 0$ for all $\parameter\neq\optimal{\parameter}$.
    \end{enumerate}
\end{asmp}

Under the stated assumptions about the map, this work proves that the NESC is sGPUAS where the definitions of practical stability are given in Appendix \ref{sec:stability-definitions} for reference.

\begin{thm}[The NESC system is sGPUAS]
	\label{thm:nesc-sgpuas}
	Let the system described in equations \eqref{eq:parameter-estimate-ode}-\eqref{eq:invhessian-estimate-ode} be acting on a static, scalar map $\costfunction$ satisfying Assumptions \ref{asmp:twice-differentiable}-\ref{asmp:global-minimizer}. Then in the error variables
	\begin{equation}
		\label{eq:change-of-variables}
		(\error{\parameter},\ \error{\loginvhessian}) = \left(\estimate{\parameter} - \stationary{\parameter},\ \ln\left(\estimate{\invhessian}\avgest{\hessian}\right)\right)
	\end{equation}
	where $\stationary{\parameter}\in\realnumbers$ is a unique point, the NESC system is sGPUAS to the origin. Furthermore, the point $(\hat\theta,\hat\Gamma)=(\stationary{\parameter}, 1/\hessian(\stationary{\parameter}))$  is practically exponentially stable.
\end{thm}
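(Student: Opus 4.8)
The plan is to use the classical three-stage averaging argument for extremum seeking: (i) pass to the average system \eqref{eq:average-system-parameter-update-ode}--\eqref{eq:average-system-invhessian-ode}, (ii) show that this average system has a unique equilibrium in $\domain$ that is locally exponentially stable and semiglobally attractive, and (iii) invoke an averaging theorem to lift these properties to the time-periodic NESC \eqref{eq:parameter-estimate-ode}--\eqref{eq:invhessian-estimate-ode}, obtaining sGPUAS in $(\error{\parameter},\error{\loginvhessian})$ together with practical exponential stability of the nominal point. The two cited propositions do the structural heavy lifting: Prop.~\ref{prop:average-gradient-estimate-strictly-monotonically-increasing} makes $\average{\gradient}$ strictly increasing (hence, under Assumption~\ref{asmp:global-minimizer}, possessing a unique zero $\stationary{\parameter}$), and Prop.~\ref{prop:average-hessian-strictly-positive} makes $\average{\hessian}(\parameter)>0$ everywhere, which is exactly what rescues the inverse-Hessian equation from the degeneracy of $\costfunction''$.

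First I would identify the equilibrium. Setting the right-hand side of \eqref{eq:average-system-invhessian-ode} to zero and using $\average{\invhessian}\neq 0$ gives $\average{\invhessian}=1/\average{\hessian}(\stationary{\parameter})>0$, which is well defined and strictly positive precisely because $\average{\hessian}>0$ by Prop.~\ref{prop:average-hessian-strictly-positive}, even where the true Hessian vanishes. Thus the average equilibrium is $(\stationary{\parameter},\,1/\average{\hessian}(\stationary{\parameter}))\in\domain$. Since $\estimate{\invhessian}=0$ is an invariant, repelling set for \eqref{eq:invhessian-estimate-ode}, trajectories never leave the half-plane $\domain$, so $\estimate{\invhessian}\,\average{\hessian}>0$ and the logarithmic coordinate $\error{\loginvhessian}=\ln(\estimate{\invhessian}\,\average{\hessian})$ of \eqref{eq:change-of-variables} is well defined along the flow.

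Next, working in $(\error{\parameter},\error{\loginvhessian})$ with $z=e^{\error{\loginvhessian}}=\average{\invhessian}\,\average{\hessian}$, the transformed average dynamics read $\dot{\error{\parameter}}=-\gain\,e^{\error{\loginvhessian}}\average{\gradient}/\average{\hessian}$ and $\dot{\error{\loginvhessian}}=\omega_l(1-e^{\error{\loginvhessian}})-\gain\,e^{\error{\loginvhessian}}\,\average{\hessian}{}'\,\average{\gradient}/\average{\hessian}^{2}$. Linearizing at the origin (where $\average{\gradient}(\stationary{\parameter})=0$) yields a \emph{lower-triangular} Jacobian with eigenvalues $-\gain\,\average{\gradient}{}'(\stationary{\parameter})/\average{\hessian}(\stationary{\parameter})$ and $-\omega_l$, both strictly negative since $\average{\gradient}{}'>0$, $\average{\hessian}>0$, and $\omega_l>0$; this gives local exponential stability of $(\stationary{\parameter},1/\average{\hessian}(\stationary{\parameter}))$ for the average system. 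For semiglobal attractivity I would use $\lyapunov_1(\error{\parameter})=\int_{\stationary{\parameter}}^{\stationary{\parameter}+\error{\parameter}}\average{\gradient}(s)/\average{\hessian}(s)\,ds$, which is positive definite because $\average{\gradient}/\average{\hessian}$ inherits the sign of its argument from the two propositions; along the flow $\dot{\lyapunov}_1=-\gain\,e^{\error{\loginvhessian}}(\average{\gradient}/\average{\hessian})^{2}\le 0$ on \emph{all} of $\domain$, since $e^{\error{\loginvhessian}}>0$ regardless of $\error{\loginvhessian}$. Hence $\error{\parameter}$ is bounded and $\average{\gradient}\to 0$; combined with the logistic structure of the $\error{\loginvhessian}$-subsystem (globally attracted to $\error{\loginvhessian}=0$ for frozen bounded $\error{\parameter}$) and a LaSalle/Barbalat argument, this yields convergence to the origin from any compact subset of $\domain$.

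Finally, with the average system semiglobally asymptotically and locally exponentially stable, I would apply the averaging theorem for systems periodic in the fast time $\omega t$ (as in the cited references) to conclude that, for $\omega$ sufficiently large, the full NESC is sGPUAS to the origin in $(\error{\parameter},\error{\loginvhessian})$ and the local exponential stability upgrades to practical exponential stability of $(\estimate{\parameter},\estimate{\invhessian})=(\stationary{\parameter},1/\average{\hessian}(\stationary{\parameter}))$. The \emph{main obstacle} is the cross-coupling term $\gain\,e^{\error{\loginvhessian}}\,\average{\hessian}{}'\,\average{\gradient}/\average{\hessian}^{2}$ in the $\error{\loginvhessian}$-dynamics: it scales with $z=e^{\error{\loginvhessian}}$ and is sign-indefinite, so the delicate work is to rule out finite-escape of $\estimate{\invhessian}$ and establish boundedness of $\error{\loginvhessian}$ uniformly over compact initial sets during the transient, before the decay of $\average{\gradient}$ (guaranteed by $\lyapunov_1$) lets the stabilizing logistic term $\omega_l(1-e^{\error{\loginvhessian}})$ dominate; controlling this term is precisely what turns the easy local result into a genuinely semiglobal one.
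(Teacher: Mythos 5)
Your overall architecture (identify the unique average equilibrium via Props.~\ref{prop:average-gradient-estimate-strictly-monotonically-increasing}--\ref{prop:average-hessian-strictly-positive}, establish stability of the average system, lift by averaging) matches the paper's, and your treatment of practical exponential stability --- linearizing the average error system at the equilibrium to obtain a triangular Jacobian with eigenvalues $-\gain\average{\gradient}'(\stationary{\parameter})/\average{\hessian}(\stationary{\parameter})$ and $-\omega_l$, then invoking an averaging theorem --- is essentially identical to the paper's second half. The gap is in the semiglobal part. Your Lyapunov function $\lyapunov_1$ depends only on $\error{\parameter}$, and its derivative $-\gain e^{\avgerror{\loginvhessian}}(\average{\gradient}/\average{\hessian})^2$ is only negative semidefinite in the full state; it gives no control over $\avgerror{\loginvhessian}$. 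You correctly identify that the cross term $-\gain e^{\avgerror{\loginvhessian}}\average{\gradient}\,\average{\hessian}'/\average{\hessian}^2$ in the $\avgerror{\loginvhessian}$-equation is sign-indefinite and scales with $e^{\avgerror{\loginvhessian}}$ --- so $\avgerror{\loginvhessian}$ could in principle escape (even in finite time) during the transient before $\average{\gradient}$ decays --- but you then declare controlling it to be ``the delicate work'' without doing it. That control is precisely the content of the theorem, not a remark one can defer: without a bound on $\avgerror{\loginvhessian}$ uniform over compact sets of initial data you have neither GUAS of the average system nor the hypotheses of the averaging theorem you want to invoke, and LaSalle/Barbalat cannot be applied because completeness and boundedness of trajectories have not been shown.

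The paper closes exactly this gap with the composite Lyapunov function \eqref{eq:lyapunov-function}, $\lyapunov = \tfrac{1}{2}\avgerror{\parameter}^2 + \beta\int_0^{\avgerror{\parameter}}\sgn(\phi)\,\vert\average{\hessian}'(\phi)\vert/\average{\hessian}(\phi)\,d\phi + \ln\left(e^{\avgerror{\loginvhessian}}-\avgerror{\loginvhessian}\right)$. The third term is positive definite and radially unbounded in $\avgerror{\loginvhessian}$; differentiating it produces the good term $-\omega_l\left(e^{\avgerror{\loginvhessian}}-1\right)^2/\left(e^{\avgerror{\loginvhessian}}-\avgerror{\loginvhessian}\right)$ plus the troublesome cross term multiplied by the factor $\left(e^{\avgerror{\loginvhessian}}-1\right)/\left(e^{\avgerror{\loginvhessian}}-\avgerror{\loginvhessian}\right)$. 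The key observation you are missing is that this factor is \emph{globally bounded}, so the second term of $\lyapunov$, whose derivative contributes $-\gain\beta\, e^{\avgerror{\loginvhessian}}\vert\average{\gradient}\cdot\average{\hessian}'\vert/\average{\hessian}^2$, dominates the cross term once $\beta$ satisfies \eqref{eq:beta-inequality}; the result is a negative definite $\dot{\lyapunov}$ on the whole plane and hence GUAS of the average error system in one step, after which the averaging/Teel-type result gives sGPUAS. To salvage your decoupled route you would need an explicit a priori transient bound on $\avgerror{\loginvhessian}$; as written, the proposal establishes local exponential stability and a weak decay statement for $\error{\parameter}$, but not the semiglobal claim.
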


\section{Motivating Example}

To demonstrate the complexity that can arise with the NESC on strictly convex functions, consider the cost function
\begin{equation}
	\label{eq:example-cost-function}
	\costfunction(\parameter) = \parameter^2 \left(\exp(\parameter) - 1\right)^2
\end{equation}
which satisfies Assumptions A\ref{asmp:twice-differentiable}-A\ref{asmp:global-minimizer} and has a global minimum at $\optimal{\parameter} = 0$. This cost function is strictly convex by Prop.~\ref{prop:strictly-positive-hessian-integral} in Appendix \ref{sec:convexity-lemmas} as the Hessian is zero only at the minimum but positive everywhere else. Qualitatively, the example function behaves as $\costfunction(\parameter)\sim \parameter^2$ as $\parameter\to -\infty$ but behaves as $\costfunction(\parameter)\sim \parameter^2\exp(2\parameter)$ as $\parameter\to\infty$. From the example trajectories in Fig.~\ref{fig:numerical-example}, it becomes clear that there is a point $(\stationary{\parameter},\stationary{\invhessian})$ which is practically stable but $\stationary{\parameter} \neq \optimal{\parameter}$. A further examination of selected trajectories in Fig.~\ref{fig:numerical-example-time-history} illustrates the local practical stability near $\stationary{\parameter}$.

\begin{figure}
	\includegraphics[width=3.25in]{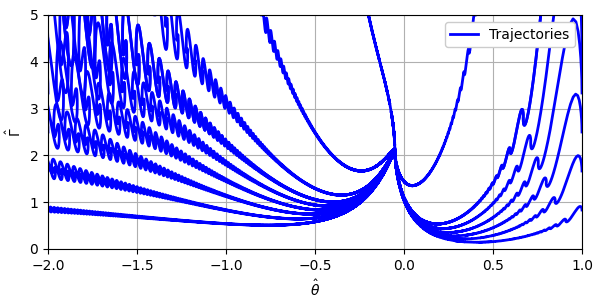}
	\caption{Numerical simulated trajectories of the NESC using Eq.~\eqref{eq:example-cost-function}. Simulation parameters were $a=0.5$, $\omega = 10$, and $\gain=\omega_l= 0.001$.}
	\label{fig:numerical-example}
\end{figure}
\begin{figure}
    \centering
    \includegraphics[width=3.25in]{./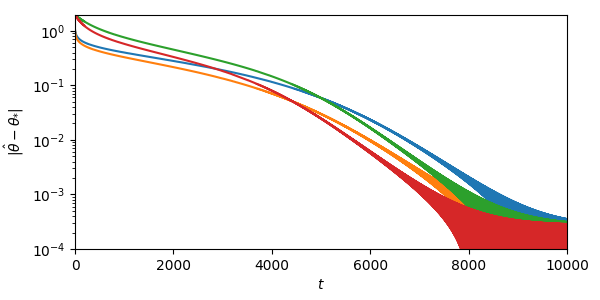}
    \caption{Numerical simulated parameter estimates of the NESC using Eq.~\eqref{eq:example-cost-function}. Simulation parameters were $a=0.5$, $\omega = 10$, and $\gain=\omega_l= 0.001$. Initial conditions of $(\estimate{\parameter},\estimate{\invhessian})$ were (1,5/6), (1,5/3), (1,5/6) and (1,5/3) for the blue, orange, red and green lines respectively. After 5,000 seconds, all the parameter estimates increase their convergence rate and appear to converge exponentially to a neighborhood of $\stationary{\parameter}$, with $\Vert \estimate{\parameter}(t) - \stationary{\parameter}\Vert <$10e-3 for $t > 8,000$ seconds.}
    \label{fig:numerical-example-time-history}
\end{figure}

\section{Main Results}
	\label{sec:main-results}

\subsection{Averaged Derivative Estimates}

To understand the stability of the NESC, one must understand the behavior of the average systems with the averaged derivative estimates $\avgest{\gradient}$ and $\avgest{\hessian}$ of the scalar map. The average system uses the average states $(\average{\parameter},\average{\invhessian})$, the averaged $\avgest{\hessian}$ defined earlier in \eqref{eq:average-hessian-estimate}, the average gradient estimate $\avgest{\gradient}$ defined earlier in \eqref{eq:average-gradient-estimate}. The average system dynamics in \eqref{eq:average-system-parameter-update-ode} and \eqref{eq:average-system-invhessian-ode} have a stable equilibrium when $\avgest{\gradient}(\avgest{\parameter}) = 0$ and $\avgest{\invhessian} = \left[\avgest{\hessian}(\avgest{\parameter})\right]^{-1}$ \cite{ref:ghaffari-2012}, so how many points $\stationary{\parameter}$ exists where $\avgest{\gradient}(\stationary{\parameter}) = 0$? First we show that $\avgest{\gradient}$ is a monotonically increasing function with respect to $\average{\parameter}$ for strictly convex scalar maps. In this case, $\stationary{\parameter}$ would be a unique point if it exists.

\begin{prop}
\label{prop:average-gradient-estimate-strictly-monotonically-increasing}
Under Assumptions \ref{asmp:twice-differentiable} and \ref{asmp:strictly-convex}, the average system gradient estimate $\avgest{\gradient}$ defined in\eqref{eq:average-gradient-estimate} is a strictly monotonically increasing function with respect to $\avgest{\parameter}$.
\end{prop}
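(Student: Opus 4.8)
The plan is to show that $\avgest{\gradient}$ has a strictly positive derivative everywhere. Since $\costfunction$ is twice differentiable by Assumption~\ref{asmp:twice-differentiable}, the derivative $\costfunction'$ is continuous, and I may differentiate \eqref{eq:average-gradient-estimate} under the integral sign to obtain
\[
\avgest{\gradient}'(\avgest{\parameter}) = \frac{1}{a\pi}\int_{0}^{2\pi}\sin(\tau)\,\costfunction'(\avgest{\parameter}+a\sin\tau)\,d\tau.
\]
The task then reduces to showing this integral is strictly positive for every $\avgest{\parameter}\in\realnumbers$.

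I would next exploit the half-period antisymmetry of the sine. Substituting $\tau\mapsto\tau+\pi$ (under which $\sin\tau\mapsto-\sin\tau$) in the portion of the integral over $[\pi,2\pi]$ folds the whole integral onto $[0,\pi]$ and gives
\[
\avgest{\gradient}'(\avgest{\parameter}) = \frac{1}{a\pi}\int_{0}^{\pi}\sin(\tau)\left[\costfunction'(\avgest{\parameter}+a\sin\tau)-\costfunction'(\avgest{\parameter}-a\sin\tau)\right]d\tau.
\]
On the open interval $(0,\pi)$ we have $\sin\tau>0$, so the two arguments $\avgest{\parameter}\pm a\sin\tau$ are distinct.

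The crucial ingredient is that, for a differentiable map, strict convexity (Assumption~\ref{asmp:strictly-convex}) is equivalent to $\costfunction'$ being strictly increasing \cite{ref:boyd-2004}. Consequently the bracketed difference carries the sign of $a\sin\tau$, so the product $\sin(\tau)\left[\costfunction'(\avgest{\parameter}+a\sin\tau)-\costfunction'(\avgest{\parameter}-a\sin\tau)\right]$ has the constant sign $\sgn(a)$ at every $\tau\in(0,\pi)$ and vanishes only at the endpoints $\tau=0,\pi$. Hence the integral equals $\sgn(a)$ times a strictly positive number, and multiplication by $1/(a\pi)$ cancels $\sgn(a)$, leaving $\avgest{\gradient}'(\avgest{\parameter})>0$ for all $\avgest{\parameter}$. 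This yields the claimed strict monotonicity.

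The point requiring the most care is that for a map that is strictly but not strongly convex the second derivative $\costfunction''$ may vanish, so the argument cannot invoke any pointwise lower bound on $\costfunction''$; it must instead rest entirely on the strict monotonicity of $\costfunction'$ applied to the two ordered arguments $\avgest{\parameter}\pm a\sin\tau$. The remaining manipulations, namely justifying differentiation under the integral via continuity of $\costfunction'$ on the compact $\tau$-interval and the sign bookkeeping in $1/(a\pi)$, are routine.
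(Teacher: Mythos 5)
Your proof is correct and follows essentially the same route as the paper's: differentiate under the integral, fold onto a half-period so the integrand becomes $\sin(\tau)\left[\costfunction'(\avgest{\parameter}+a\sin\tau)-\costfunction'(\avgest{\parameter}-a\sin\tau)\right]$, and conclude strict positivity from strict convexity. The only difference is cosmetic: you certify the bracketed difference is positive via strict monotonicity of $\costfunction'$ (the first-order condition), whereas the paper writes that difference as $\int_{\avgest{\parameter}-a\sin\tau}^{\avgest{\parameter}+a\sin\tau}\costfunction''(\phi)\,d\phi$ and invokes Prop.~\ref{prop:strictly-positive-hessian-integral}; these are equivalent certificates, and your bookkeeping of the sign of $a$ is in fact slightly more careful than the paper's.
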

\begin{proof}
Rearranging \eqref{eq:average-gradient-estimate} gives in a more suitable form for differentiation
\begin{equation}
	\avgest{\gradient}(\avgest{\parameter}) = \int_{0}^{\pi}\frac{\sin(\tau)}{a\pi}\left(\costfunction(\avgest{\parameter} + a\sin\tau) - \costfunction(\avgest{\parameter} - a\sin\tau) \right)d\tau
\end{equation}
which when differentiating and then replacing the difference with an integration yields
\begin{equation}
	\avgest{\gradient}'(\avgest{\parameter}) = \frac{1}{a\pi}\int_0^{\pi}\sin(\tau)\int_{\avgest{\parameter}-a\sin(\tau)}^{\avgest{\parameter}+a\sin(\tau)} \costfunction''(\phi)d\phi d\tau > 0.
\end{equation} The strict inequality is a consequence of Prop.~\ref{prop:strictly-positive-hessian-integral} and $\sin\tau > 0$ for all $\tau\in(0,\ \pi)$. With the derivative of $\avgest{\gradient}$ being strictly positive, $\avgest{\gradient}$ must be a strictly monotonically increasing function with respect to $\avgest{\parameter}$.
\end{proof}

In addition to showing the monotonicity of $\avgest{\gradient}$, we establish the existence and uniqueness of $\stationary{\parameter}$ when the scalar map has a global minimizer.

\begin{prop}
\label{prop:average-gradient-estimate-unique-zero}
Under Assumptions \ref{asmp:twice-differentiable}-\ref{asmp:global-minimizer} the gradient estimate in the average system $\avgest{\gradient}$ as defined by \eqref{eq:average-gradient-estimate} has a unique zero $\stationary{\parameter}\in(\optimal{\parameter} - a,\optimal{\parameter} + a)$.
\end{prop}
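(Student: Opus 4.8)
The plan is to prove existence and uniqueness separately, leveraging Proposition~\ref{prop:average-gradient-estimate-strictly-monotonically-increasing}, which already guarantees that $\avgest{\gradient}$ is strictly monotonically increasing in $\avgest{\parameter}$. Uniqueness is then immediate: a strictly increasing function can cross zero at most once, so if a zero $\stationary{\parameter}$ exists it is unique. The substantive work is therefore to establish existence and to localize the zero inside the interval $(\optimal{\parameter}-a,\ \optimal{\parameter}+a)$.

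For existence and localization, the natural approach is the intermediate value theorem applied to the continuous function $\avgest{\gradient}$ (continuity follows from Assumption~\ref{asmp:twice-differentiable} and differentiation under the integral sign). I would evaluate $\avgest{\gradient}$ at the two endpoints $\avgest{\parameter}=\optimal{\parameter}-a$ and $\avgest{\parameter}=\optimal{\parameter}+a$ and show the values have opposite signs. The cleanest route is to use the symmetric-difference form from the previous proof,
\begin{equation}
	\avgest{\gradient}(\avgest{\parameter}) = \frac{1}{a\pi}\int_{0}^{\pi}\sin(\tau)\left(\costfunction(\avgest{\parameter} + a\sin\tau) - \costfunction(\avgest{\parameter} - a\sin\tau)\right)d\tau,
\end{equation}
and combine it with Assumption~\ref{asmp:global-minimizer}. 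At $\avgest{\parameter}=\optimal{\parameter}+a$, every sampling point $\avgest{\parameter}+a\sin\tau$ and $\avgest{\parameter}-a\sin\tau$ lies at or to the right of $\optimal{\parameter}$ for $\tau\in(0,\pi)$; since $\costfunction'(\parameter)>0$ strictly to the right of $\optimal{\parameter}$ (using items~2 and~3 of Assumption~\ref{asmp:global-minimizer} together with convexity), the upper sample exceeds the lower sample, making the integrand strictly positive and hence $\avgest{\gradient}(\optimal{\parameter}+a)>0$. A symmetric argument at $\avgest{\parameter}=\optimal{\parameter}-a$, where all sample points lie at or to the left of $\optimal{\parameter}$ and $\costfunction'<0$ there, gives $\avgest{\gradient}(\optimal{\parameter}-a)<0$. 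The intermediate value theorem then yields a zero in the open interval, and monotonicity pins it down as the unique one.

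The main obstacle I anticipate is the endpoint sign argument itself, specifically justifying the strict inequality rather than a weak one. At the boundary value $\avgest{\parameter}=\optimal{\parameter}+a$, the sample point $\avgest{\parameter}-a\sin\tau$ equals $\optimal{\parameter}$ exactly when $\tau=\pi/2$, so at that single point the difference $\costfunction(\avgest{\parameter}+a\sin\tau)-\costfunction(\avgest{\parameter}-a\sin\tau)$ is still strictly positive because the upper argument is strictly greater than $\optimal{\parameter}$; the delicacy is confirming the integrand stays strictly positive on a set of positive measure so the integral is strictly, not weakly, positive. This is secured by noting the integrand is strictly positive for all $\tau\in(0,\pi)$ away from isolated points, which suffices. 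One must be careful to invoke the correct monotonicity of $\costfunction$ on each side of $\optimal{\parameter}$, which is exactly what Assumption~\ref{asmp:global-minimizer} provides together with strict convexity, so no additional hypotheses are needed.
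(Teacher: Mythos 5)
Your proof is correct, and its overall skeleton (strict monotonicity of $\avgest{\gradient}$ from Prop.~\ref{prop:average-gradient-estimate-strictly-monotonically-increasing} for uniqueness, plus the intermediate value theorem with a sign change across $[\optimal{\parameter}-a,\ \optimal{\parameter}+a]$ for existence) matches the paper's. The key technical step is carried out differently, however. You evaluate $\avgest{\gradient}$ only at the two endpoints and argue directly from the symmetric-difference form: at $\avgest{\parameter}=\optimal{\parameter}+a$ both sample points $\avgest{\parameter}\pm a\sin\tau$ lie in $[\optimal{\parameter},\infty)$, where $\costfunction$ is strictly increasing (Assumption~\ref{asmp:global-minimizer} items 2--3 plus convexity force $\costfunction'>0$ there), so the integrand is strictly positive for every $\tau\in(0,\pi)$ --- your worry about ``isolated points'' is actually unnecessary, since even at $\tau=\pi/2$ the upper argument strictly exceeds the lower one and the difference $\costfunction(\optimal{\parameter}+2a)-\costfunction(\optimal{\parameter})$ is strictly positive. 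The paper instead constructs an auxiliary piecewise function $f$ from a convex line segment and a support line to sandwich the averaged gradient for \emph{arbitrary} $\avgest{\parameter}$, obtaining $\costfunction'(\avgest{\parameter}-a) < \avgest{\gradient}(\avgest{\parameter}) < \costfunction'(\avgest{\parameter}+a)$, and only then specializes to the endpoints. Your route is more elementary and avoids the auxiliary-function bookkeeping; the paper's route buys a quantitative two-sided bound relating the averaged gradient to the true gradient at shifted arguments everywhere on $\realnumbers$, which is of independent interest beyond locating the zero. Both arguments rely on the same ingredients (strict convexity, the global-minimizer assumption, and continuity of $\avgest{\gradient}$), so no hypotheses are missing from your version.
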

\begin{proof}
The proof follows by providing an upper and lower bound to $\avgest{\gradient}$ and apply Assumption~\ref{asmp:global-minimizer} to the found bounds. First, \eqref{eq:average-gradient-estimate} is simplified by periodicity to
\begin{equation}
	\label{eq:average-gradient-estimate-alt}
	\avgest{\gradient} = \frac{2}{a\pi}\int_{-\pi/2}^{\pi/2} \sin(\tau)\costfunction(\avgest{\parameter} + a\sin(\tau))d\tau
\end{equation}
Next, we introduce an auxiliary function $f$ using a convex line segment and support line
\begin{subnumcases}{f(\phi)=}
	\lambda\costfunction\left(\average{\parameter} - a\right) + \left(1 - \lambda\right)\costfunction(\average{\parameter}) & for $\phi < \avgest{\parameter}$ \\
	\costfunction(\average{\parameter}) + \costfunction'(\average{\parameter})\cdot(\parameter - \avgest{\parameter}) & for $\phi \geq \avgest{\parameter}$
\end{subnumcases}
where $\lambda = (\average{\parameter} - \phi)/a$ to form a lower, strict bound on the integral in \eqref{eq:average-gradient-estimate-alt} by replacing $\costfunction$ with $f$ in the integrand. The lower bounds are justified by the convex line segment and support line regions. The inequality
	\begin{equation}
		\sin(\tau)f(\avgest{\parameter} + a\sin(\tau)) \leq \sin(\tau)\costfunction(\avgest{\parameter} + a\sin(\tau))
	\end{equation}
may then be formed with equality met only when $\sin\tau = 0$. Hence, replacing $f$ for $\costfunction$ in \eqref{eq:average-gradient-estimate-alt} forms the lower bounds of $\avgest{G}$
\begin{eqnarray}
	\avgest{\gradient}(\avgest{\parameter}) &>& \frac{2}{a\pi}\int_{-\pi/2}^{\pi/2} \sin(\tau)f(\avgest{\parameter} + a\sin(\tau))d\tau \\
	&=& \frac{1}{2}\left(\costfunction'(\avgest{\parameter}) + \frac{1}{a}\int_{\avgest{\parameter}-a}^{\avgest{\parameter}} \costfunction'(\phi)d\phi\right) \\
	\label{eq:average-gradient-estimate-lower-bound}
	& > & \costfunction'(\avgest{\parameter} - a)
\end{eqnarray}
with the last strict inequality a result of Prop.~\ref{prop:strictly-positive-hessian-integral}. The upper bound for $\avgest{\gradient}$ follows a similar logic and argument to arrive at
\begin{eqnarray}
\avgest{\gradient}(\avgest{\parameter}) &<& \frac{1}{2}\left(\costfunction'(\avgest{\parameter}) + \frac{1}{a}\int_{\avgest{\parameter}-a}^{\avgest{\parameter}} \costfunction'(\phi)d\phi\right) \\
	\label{eq:average-gradient-estimate-upper-bound}
	& < & \costfunction'(\avgest{\parameter} + a)
\end{eqnarray}
With the upper and lower bounds established on $\avgest{\gradient}$, we combine both \eqref{eq:average-gradient-estimate-lower-bound} and \eqref{eq:average-gradient-estimate-upper-bound} along with Assumption~\ref{asmp:global-minimizer} to show that 
\begin{eqnarray}
\avgest{\gradient}(\optimal{\parameter} - a) < 0 < \avgest{\gradient}(\optimal{\parameter} + a)
\end{eqnarray}
By the Intermediate Value Theorem, there is a zero $\stationary{\parameter}\in(\optimal{\parameter}-a,\ \optimal{\parameter}+a)$ and this zero is unique as $\avgest{\gradient}$ is a strictly monotonically increasing function by Prop.~\ref{prop:average-gradient-estimate-strictly-monotonically-increasing}.
\end{proof}

The existence and uniqueness of $\stationary{\parameter}$ does establish the uniqueness of a stable equilibrium for the NESC system but it does not establish by itself the existence of such an equilibrium. If one considers \eqref{eq:average-system-invhessian-ode} with $\average{\hessian}(\stationary{\parameter}) = 0$, then $\average{\invhessian}\to\infty$ as $t\to\infty$. This divergence would prevent the existence of a stable equilibrium. However, the averaging process in \eqref{eq:average-hessian-estimate} ensures that $\avgest{\hessian}$ is strictly positive everywhere.

\begin{prop}
\label{prop:average-hessian-strictly-positive}
Under Assumptions \ref{asmp:twice-differentiable} and \ref{asmp:strictly-convex}, the Hessian estimate in the average system $\avgest{\hessian}$ as defined by \eqref{eq:average-hessian-estimate} is a strictly positive function for any $a\neq 0$.
\end{prop}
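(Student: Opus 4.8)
The plan is to reduce the weighted integral defining $\avgest{\hessian}$ to a manifestly nonnegative integral of $\costfunction''$ by two integrations by parts, and then to upgrade nonnegativity to strict positivity using strict convexity through Prop.~\ref{prop:strictly-positive-hessian-integral}.

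First I would note that the weight satisfies $\sin^2\tau - \tfrac12 = -\tfrac12\cos 2\tau$, whose antiderivative $-\tfrac14\sin 2\tau$ vanishes at $\tau=0$ and $\tau=2\pi$. Integrating \eqref{eq:average-hessian-estimate} by parts in $\tau$ therefore kills the boundary term and transfers the derivative onto $\costfunction(\estimate{\parameter}+a\sin\tau)$, producing (after the identity $\sin 2\tau\cos\tau = 2\sin\tau\cos^2\tau$) an integrand proportional to $\sin\tau\cos^2\tau\,\costfunction'(\estimate{\parameter}+a\sin\tau)$. A second integration by parts, using the antiderivative $-\tfrac13\cos^3\tau$ of $\sin\tau\cos^2\tau$ (again with vanishing periodic boundary terms), transfers the remaining derivative and yields the closed form
\begin{equation}
\avgest{\hessian}(\estimate{\parameter}) = \frac{4}{3\pi}\int_0^{2\pi}\cos^4\tau\,\costfunction''(\estimate{\parameter} + a\sin\tau)\,d\tau .
\end{equation}
As a sanity check, $\costfunction(\parameter)=\parameter^2$ gives $\costfunction''=2$ and $\int_0^{2\pi}\cos^4\tau\,d\tau = 3\pi/4$, reproducing $\avgest{\hessian}=2$.

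From this representation the sign is almost immediate: $\cos^4\tau\ge 0$ and, by convexity, $\costfunction''\ge 0$, so the integrand is nonnegative and $\avgest{\hessian}(\estimate{\parameter})\ge 0$ for every $\estimate{\parameter}$ and every $a\neq 0$. The work is in ruling out equality, which I would do by contradiction. If the integral vanished, then since the integrand is nonnegative it must vanish almost everywhere, and because $\cos^4\tau>0$ off the isolated points $\tau=\pi/2,3\pi/2$, this forces $\costfunction''(\estimate{\parameter}+a\sin\tau)=0$ for almost every $\tau$. On $\tau\in(0,\pi/2)$ the map $\tau\mapsto\estimate{\parameter}+a\sin\tau$ is a diffeomorphism onto the open interval with endpoints $\estimate{\parameter}$ and $\estimate{\parameter}+a$ (its derivative $a\cos\tau$ is nonzero there), so it transports this null set and gives $\costfunction''=0$ almost everywhere on that interval; hence its integral of $\costfunction''$ is zero, contradicting Prop.~\ref{prop:strictly-positive-hessian-integral}. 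Therefore $\avgest{\hessian}(\estimate{\parameter})>0$.

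The main obstacle is precisely this last strict-positivity step: because Assumption~\ref{asmp:twice-differentiable} grants only twice differentiability (not continuity of $\costfunction''$), and because strict-but-not-strong convexity permits $\costfunction''$ to vanish at isolated points, I cannot conclude positivity pointwise and must instead push the vanishing through the change of variables onto a nondegenerate interval and invoke the strictly-positive Hessian-integral property of Prop.~\ref{prop:strictly-positive-hessian-integral}. Everything else is routine integration by parts together with the periodicity that annihilates the boundary terms.
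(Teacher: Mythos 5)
Your proof is correct, but it takes a genuinely different route from the paper's. The paper keeps the integral on $\costfunction$ itself: it splits the dither interval according to the sign of the weight $-\cos 2\tau$ and sandwiches $\costfunction$ with an auxiliary piecewise function $h$ built from support lines (where the weight is positive) and chords (where it is negative), which yields the explicit strictly positive lower bound \eqref{eq:average-hessian-lower-bound} consisting of a strict midpoint-convexity gap plus the Hessian integral $\int_{\avgest{\parameter}-a/\sqrt{2}}^{\avgest{\parameter}+a/\sqrt{2}}\costfunction''$, both handled by \eqref{eq:strict-convexity}, Lemma~\ref{lem:convexity-first-order-condition}, and Prop.~\ref{prop:strictly-positive-hessian-integral}. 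You instead integrate by parts twice (the periodic boundary terms vanish, and your constants check out: $\tfrac{8}{a^2\pi}\cdot\tfrac{a^2}{6}=\tfrac{4}{3\pi}$) to obtain the closed form $\avgest{\hessian}(\estimate{\parameter}) = \tfrac{4}{3\pi}\int_0^{2\pi}\cos^4\tau\,\costfunction''(\estimate{\parameter}+a\sin\tau)\,d\tau$, which makes nonnegativity immediate and is arguably more illuminating, since it exhibits $\avgest{\hessian}$ as a positive-weighted average of the true Hessian over the probed interval; strictness then follows from your almost-everywhere argument pushed through the diffeomorphism $\tau\mapsto\estimate{\parameter}+a\sin\tau$ and Prop.~\ref{prop:strictly-positive-hessian-integral}, which is the same final lemma the paper leans on. The trade-off is regularity: your second integration by parts and the ``integral zero implies zero a.e.''\ step need $\costfunction'$ to be absolutely continuous with $\costfunction''$ integrable, which is not literally granted by Assumption~\ref{asmp:twice-differentiable} alone but does follow here because convexity makes $\costfunction'$ monotone and everywhere differentiable (so the fundamental theorem of calculus applies with the Lebesgue integral); you flag this issue and handle it correctly, whereas the paper's construction sidesteps it entirely by never differentiating under the integral, at the cost of a more elaborate geometric construction. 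Both proofs are valid; yours is shorter and yields a reusable identity, the paper's yields a quantitative lower bound.
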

\begin{proof}
Similar to the proof of Prop.~\ref{prop:average-gradient-estimate-unique-zero}, \eqref{eq:average-hessian-estimate} is simplified by periodicity to
\begin{equation}
	\label{eq:average-hessian-estimate-alt}
	\avgest{\hessian}(\avgest{\parameter}) = \frac{8}{a^2\pi}\int_{-\pi/2}^{\pi/2} \left(-\cos(2\tau)\right)\costfunction(\avgest{\parameter} + a\sin\tau)d\tau
\end{equation}
and uses another auxiliary function $h$ based on parameterized convex line segments $\ell$ and parameterized support lines $s$.
\begin{eqnarray}
	\ell(\phi;\ \phi_{1}, \phi_{2}) &=& \left(\frac{\phi_2 - \phi}{\phi_2 - \phi_1}\right)\costfunction(\phi_1) \nonumber \\ &&+ \left(\frac{\phi - \phi_1}{\phi_2 - \phi_1}\right)\costfunction(\phi_2) \\
	s(\phi;\ \phi_s) &=& \costfunction(\phi_s) +\costfunction'(\phi_s)\cdot\left(\phi - \phi_s\right)
\end{eqnarray}
\begin{subnumcases}{h(\phi)=}
	s\left(\phi;\ \average{\parameter}-\frac{a}{\sqrt{2}}\right) & $\phi\in\left[\average{\parameter}-a,\average{\parameter}-\dfrac{a}{\sqrt{2}}\right]$ \\
	\ell\left(\phi;\ \avgest{\parameter} - \frac{a}{\sqrt{2}}, \average{\parameter}\right) & $\phi\in\left(\average{\parameter}-\dfrac{a}{\sqrt{2}},\average{\parameter}\right]$ \\
	\ell\left(\phi;\ \average{\parameter}, \avgest{\parameter} + \frac{a}{\sqrt{2}}\right) & $\phi\in\left(\average{\parameter},\average{\parameter}+\dfrac{a}{\sqrt{2}}\right]$ \\
	s\left(\phi;\ \average{\parameter}+\frac{a}{\sqrt{2}}\right) & $\phi\in\left(\average{\parameter}+\dfrac{a}{\sqrt{2}},\average{\parameter}+a\right]$
\end{subnumcases}
This auxiliary function $h$ forms an inequality with the map $\costfunction$
	\begin{equation}
		-\cos(2\tau)h(\avgest{\parameter} + a\sin\tau) \leq -\cos(2\tau)\costfunction(\avgest{\parameter} + a\sin\tau)
	\end{equation} 
which meets with equality only when $\cos2\tau = 0$. The inequality is justified by the sign of $-\cos2\tau$ and the strict inequalities of \eqref{eq:strict-convexity} and Lemma~\ref{lem:convexity-first-order-condition}. The auxiliary function $h$ is show in Fig.~\ref{fig:h-auxiliary-function} for reference. A lower bound for \eqref{eq:average-hessian-estimate-alt} is provided in \eqref{eq:average-hessian-lower-bound} by using $h$ rather than $\costfunction$. This lower bound is a strictly positive function. The first and second terms are strictly positive by \eqref{eq:strict-convexity} and Prop.~\ref{prop:strictly-positive-hessian-integral}, respectively.
\begin{figure*}[!t]
\normalsize
\begin{eqnarray}
	\avgest{\hessian}(\avgest{\parameter}) &>& \frac{8}{a^2\pi}\int_{-\pi/2}^{-\pi/2} \left(-\cos2\tau\right)h(\avgest{\parameter} + a\sin\tau)d\tau \\
    \label{eq:average-hessian-lower-bound}
	&=& \frac{3(2\sqrt{2} - 1)}{4\pi a^2}\left(\frac{1}{2}\costfunction\left(\average{\parameter} + \frac{a}{\sqrt{2}}\right) + \frac{1}{2}\costfunction\left(\average{\parameter} - \frac{a}{\sqrt{2}}\right) - \costfunction\left(\average{\parameter}\right)\right) + \frac{3\sqrt{2}}{4\pi a} \int_{\avgest{\parameter}-a/\sqrt{2}}^{\avgest{\parameter}+a/\sqrt{2}} \costfunction''(\phi)d\phi > 0
\end{eqnarray}
\hrulefill
\end{figure*}
\begin{figure}[htb]
	\centering
    \resizebox{3.25in}{!}{\begin{tikzpicture}
	\draw[ultra thick, -latex] (-4.5, -1) --++(9, 0) node[anchor=west] {$\average{\parameter}$};
	\draw[ultra thick, -latex] (-5, -1) --++(0, 4.5) node[anchor=east] {$\costfunction(\average{\parameter})$};
	
	\draw[thick, black!60, dashed] ({-4}, -1.75) node[anchor=north, black] {$\average{\parameter} - a$} --++(0, {1.75 + 1.5/16*(-4 + 2)^2})  -- ++(0, 1);
	\draw[thick, black!60, dashed] ({-4/sqrt(2)}, -1.25) node[anchor=north, black] {$\average{\parameter} - \frac{a}{\sqrt{2}}$} --++(0, {1.25 + 1.5/16*(-4/sqrt(2) + 2)^2});
	\draw[thick, black!60, dashed] ({4}, -1.75) node[anchor=north, black] {$\average{\parameter} + a$} --++(0, {1.75 + 1.5/16*(4 + 2)^2})  -- ++(0, 0.1);
	\draw[thick, black!60, dashed] ({4/sqrt(2)}, -1.25) node[anchor=north, black] {$\average{\parameter} + \frac{a}{\sqrt{2}}$} --++(0, {1.25 + 1.5/16*(4/sqrt(2) + 2)^2});
	\draw[thick, black!60, dashed] ({0}, -1.75) node[anchor=north, black] {$\average{\parameter}$} --++(0, {1.75 + 1.5/16*(0 + 2)^2});
	
	\draw[ultra thick, domain=-4.2:4.1] plot (\x, {1.5/16*(\x + 2)^2)});
	
	\draw[ultra thick, dotted, red, domain=2.8:4] plot (\x, {(\x - 4/sqrt(2))*(1.5/8*(4/sqrt(2) + 2)) + (1.5/16*(4/sqrt(2) + 2)^2});
	\draw[ultra thick, dotted, red, domain=-4:-2.8] plot (\x, {(\x + 4/sqrt(2))*(1.5/8*(-4/sqrt(2) + 2)) + (1.5/16*(-4/sqrt(2) + 2)^2});
	
	\draw[ultra thick, dashed, blue] (0, {1.5/16*(2^2)}) -- ({4/sqrt(2)}, {1.5/16*(4/sqrt(2) + 2)^2});
	\draw[ultra thick, dashed, blue] (0, {1.5/16*(2^2)}) -- ({-4/sqrt(2)}, {1.5/16*(-4/sqrt(2) + 2)^2});
	
	\filldraw[black] ({-4/sqrt(2)}, {1.5/16*(-4/sqrt(2) + 2)^2}) circle (3pt);
	\filldraw[black] ({0}, {1.5/16*(2)^2}) circle (3pt);
	\filldraw[black] ({4/sqrt(2)}, {1.5/16*(4/sqrt(2) + 2)^2}) circle (3pt);
	
	\draw node[black] at (-1.5, 2.5) {
		\begin{tabular}{cl}
			{\tikz\draw[black, ultra thick] (0,0) -- (1, 0);}  & Graph of $\costfunction$ \\
			{\tikz\draw[red, dashed, ultra thick] (0,0) -- (1, 0);}  & Support Lines \\
			{\tikz\draw[blue, dashed, ultra thick] (0,0) -- (1, 0);}  & Convex Line Segments \\
			{\tikz\filldraw[black] (0,0) circle (3pt);} & Point on $(\average{\parameter}, \costfunction(\average{\parameter}))$ curve
		\end{tabular}
	};
\end{tikzpicture}} %
	\caption{The auxiliary function $h$ constructed from support lines and convex line segments for an example $\costfunction$.}
	\label{fig:h-auxiliary-function}
\end{figure}
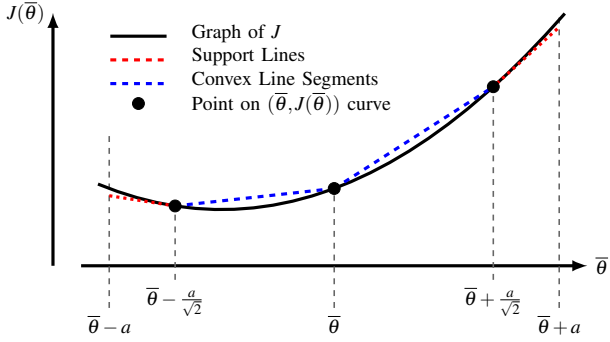
\end{proof}

\subsection{NESC Practical Stability}
	\label{sec:nesc-practical-stability}

With the existence and uniqueness of a stable equilibrium of the average NESC system established, we now give the stability results of this work.

\begin{proof}[Proof of Theorem~\ref{thm:nesc-sgpuas}]

First we change of variables from $(\estimate{\parameter},\estimate{\invhessian})$ into the error coordinates $(\error{\parameter},\error{\loginvhessian})$ where $\error{\parameter} = \estimate{\parameter} - \stationary{\parameter}$ and $\error{\loginvhessian} = \ln(\estimate{\invhessian}\cdot\average{\hessian}(\estimate{\parameter}))$. This transform is valid everywhere since $\estimate{\invhessian} > 0$ by the original domain and $\avgest{\hessian} > 0$ from Prop.~\ref{prop:average-hessian-strictly-positive}. The dynamics in the new variables are
\begin{eqnarray}
	\dot{\error{\parameter}} &=& -\gain e^{\error{\loginvhessian}}\cdot\frac{\estimate{\gradient}}{\avgest{\hessian}(\error{\parameter})}\\
	\dot{\error{\loginvhessian}} &=& \omega_l\left(1 - e^{\error{\loginvhessian}}\frac{\estimate{\hessian}}{\avgest{\hessian}(\error{\parameter})}\right) - \gain e^{\error{\loginvhessian}}\cdot\estimate{\gradient}\cdot\frac{\avgest{\hessian}'(\error{\parameter})}{\avgest{\hessian}^2(\error{\parameter})}.
\end{eqnarray}
The parameter estimate differential equation remains unchanged except that it now appears like a Newton Flow scaled by $e^{\error{\loginvhessian}}$. The differential equation for transformed  $\error{\loginvhessian}$ gains a second term which accounts for $\avgest{\hessian}$ varying as $\error{\parameter}$ changes. To prove the sGPUAS of this transformed coordinate system, one first establishes the GUAS of the average system. The average dynamics of the transformed coordinates are 
\begin{eqnarray}
	\label{eq:average-parameter-error-ode}
	\dot{\avgerror{\parameter}} &=& -\gain e^{\avgerror{\loginvhessian}}\cdot\frac{\avgest{\gradient}(\avgerror{\parameter})}{\avgest{\hessian}(\avgerror{\parameter})} \\
	\label{eq:average-log-invhessian-error-ode}
	\dot{\avgerror{\loginvhessian}} &=& \omega_l\left(1 - e^{\avgerror{\loginvhessian}}\right) - \gain e^{\avgerror{\loginvhessian}}\cdot\avgest{\gradient}(\avgerror{\parameter})\cdot\frac{\avgest{\hessian}'(\avgerror{\parameter})}{\avgest{\hessian}^2(\avgerror{\parameter})}
\end{eqnarray}

The candidate Lyapunov function in this average error system $(\avgerror{\parameter},\avgerror{\loginvhessian})$ is
\begin{equation}
	\label{eq:lyapunov-function}
	\lyapunov(\avgerror{\parameter},\avgerror{\loginvhessian}) =  \frac{1}{2}\avgerror{\parameter}^2 + \beta\int_0^{\avgerror{\parameter}} \sgn(\phi) \frac{\vert\average{\hessian}'(\phi)\vert}{\average{\hessian}(\phi)}d\phi + \ln\left(e^{\avgerror{\loginvhessian}} - \avgerror{\loginvhessian}\right)
\end{equation}
where $\sgn$ is the sign function and $\beta$ is chosen to satisfy
\begin{equation}
	\label{eq:beta-inequality}
	\max_{\avgerror{\loginvhessian}\in\realnumbers}\left\vert \frac{e^{\avgerror{\loginvhessian}} - 1}{e^{\avgerror{\loginvhessian}} - \avgerror{\loginvhessian}} \right\vert \leq \beta.
\end{equation} The first and third terms are radially unbounded, positive definite functions with respect to their arguments of $\avgerror{\parameter}$ and $\avgerror{\loginvhessian}$, respectively while the second term is positive semidefinite function with respect to its argument $\avgerror{\parameter}$. Hence, $V$ is a radially unbounded function. Taking the Lie derivative of \eqref{eq:lyapunov-function} results in
\begin{figure*}[!t]
\normalsize
\begin{eqnarray}
    \label{eq:lyanpuov-lie-derivative}
	\frac{d}{dt}\lyapunov(\avgerror{\parameter},\avgerror{\loginvhessian}) &=& - \gain e^{\avgerror{\loginvhessian}}\cdot\frac{\vert\avgest{\gradient}(\avgerror{\parameter})\cdot\avgest{\hessian}'(\avgerror{\parameter})\vert}{\avgest{\hessian}^2(\avgerror{\parameter})}\left(\beta - \frac{e^{\avgerror{\loginvhessian}} - 1}{e^{\avgerror{\loginvhessian}} - \avgerror{\loginvhessian}}\cdot\sgn(\avgerror{\parameter}\cdot\avgest{\hessian}'(\avgerror{\parameter}))\right) -\gain e^{\avgerror{\loginvhessian}}\cdot\frac{\avgerror{\gradient}(\avgerror{\parameter})\cdot\avgerror{\parameter}}{\avgerror{\hessian}(\avgerror{\parameter})} - \omega_l \frac{\left(e^{\avgerror{\loginvhessian}} - 1\right)^2}{e^{\avgerror{\loginvhessian}} - \avgerror{\loginvhessian}}
\end{eqnarray}
\hrulefill
\end{figure*}
\eqref{eq:lyanpuov-lie-derivative} where $\avgest{\gradient}(\avgerror{\parameter})\cdot\sgn(\avgerror{\parameter}) = \vert\avgest{\gradient}(\avgerror{\parameter})\vert$ by Prop.~\ref{prop:average-gradient-estimate-strictly-monotonically-increasing}. This is a continuous function as discontinuities from the sign function only occur when multiplied by $\avgest{\gradient}(\avgerror{\parameter})\cdot\avgest{\hessian}'(\avgerror{\parameter}) = 0$. The second and third term summation in \eqref{eq:lyanpuov-lie-derivative} give negative definite functions over the $(\avgerror{\parameter},\avgerror{\loginvhessian})$ plane. The first term is a negative semi-definite function given $\beta$ satisfies the functional bounds of \eqref{eq:beta-inequality}. Thus \eqref{eq:lyanpuov-lie-derivative} is a negative definite function. The Lyapunov function being radially unbounded positive definite function with a negative definite implies that the average system $(\avgerror{\parameter},\avgerror{\loginvhessian})$ is GUAS to the origin \cite[Th.~4.9]{ref:khalil-2002} and the original system $(\error{\parameter},\error{\loginvhessian})$ is consequently sGPUAS to the origin \cite{ref:teel-1998}.

For the practical exponential stability of the NESC, we consider a new set of error variables $(\error{\parameter}, \error{\invhessian})$ where $\error{\invhessian} = \estimate{\invhessian} - \stationary{\invhessian}$ and $\stationary{\invhessian} = 1/\average{\hessian}(\stationary{\parameter})$. The system dynamics are
\begin{eqnarray}
    \dot{\error{\parameter}} &=& -\gain \left(\error{\invhessian} + \stationary{\invhessian}\right) \estimate{\gradient} \\
    \dot{\error{\invhessian}} &=& -\omega_l \left(\error{\invhessian} + \stationary{\invhessian})\right)\left(1 - \left(\error{\invhessian} + \stationary{\invhessian}\right)\estimate{\hessian}\right)
\end{eqnarray}
and the average system dynamics are
\begin{eqnarray}
    \dot{\avgerror{\parameter}} &=& -\gain \left(\avgerror{\invhessian} + \stationary{\invhessian}\right) \estimate{\gradient} \\
    \dot{\avgerror{\invhessian}} &=& -\omega_l \left(\avgerror{\invhessian} + \stationary{\invhessian}\right)\left(1 - \left(\avgerror{\invhessian} + \stationary{\invhessian}\right)\average{\hessian}(\avgerror{\parameter})\right)
\end{eqnarray}
The linearization around the origin of this error system is
\begin{equation}
    \label{eq:linear-error-linearization}
    \frac{d}{dt}\begin{bmatrix} \avgerror{\parameter} \\ \avgerror{\invhessian}\end{bmatrix} = \begin{bmatrix}
        -\gain \frac{\average{\gradient}'(\stationary{\parameter})}{\average{\hessian}(\stationary{\parameter})} & 0 \\
        0 & -\omega_l
    \end{bmatrix}\begin{bmatrix}\avgerror{\parameter} \\ \avgerror{\invhessian}\end{bmatrix}
\end{equation}
Trivially the eigenvalues of the coefficient matrix of \eqref{eq:linear-error-linearization} has eigenvalues of $-\gain \average{\gradient}'(\stationary{\parameter})/\average{\hessian}(\stationary{\parameter})$ and $-\omega_l$. All of these eigenvalues are negative since $\average{\gradient}' > 0$ and $\average{\hessian} > 0$ by Prop.~\ref{prop:average-gradient-estimate-strictly-monotonically-increasing} and \ref{prop:average-hessian-strictly-positive} respectively, so the coefficent matrix is Hurwitz. The origin is then exponentially stable for the average system \cite[Thm 4.7]{ref:khalil-2002}. Coupled with \cite[Thm 10.4]{ref:khalil-2002}, the $(\error{\parameter},\error{\invhessian})$ is exponentially stable to a periodic orbit in a neighborhood of radius $\mathcal{O}(\omega^{-1})$ which satisfies the definition of practical exponential stability provided in Appendix \ref{sec:stability-definitions}.
\end{proof}

\section{Conclusions}

We have demonstrated the practical stability of the NESC, both the sGPUAS and the local practical exponential stability to unique equilibrium point for scalar maps which are strictly convex, even when for those not strongly convex. Critically, the average of the perturbation-based gradient and Hessian estimates retain important properties that allow for model-free ESC to achieve stronger practical stability on these maps than their model-based analogs would suggest. Specifically, the NESC algorithm is sGPUAS and PES to the equilibrium of the system described by \eqref{eq:parameter-estimate-ode} and \eqref{eq:invhessian-estimate-ode}.

\bibliographystyle{IEEEtranS}
\bibliography{refs}

\appendices
\section{Practical Stability}
	\label{sec:stability-definitions}
	
The stability of systems whose state dynamics depend on a vector of small perturbation parameter are often studied under practical stability. In this work, we consider systems  of the singular small parameter $\omega^{-1}$, i.e.\ the arbitrarily fast dither rates $\omega$. These parameter dependent systems satisfy
\begin{equation}
	\label{eq:fast-oscillation-ode}
	\frac{d}{dt} x = f^{\omega}(t, x)
\end{equation}
where $x(t)$ is a solution to \eqref{eq:fast-oscillation-ode}. The solutions of the parameter dependent \eqref{eq:fast-oscillation-ode} may converge to a close neighborhood of the origin as $t\to\infty$ given the initial conditions of $x$ are sufficiently close to the origin. The notion of practical stability of the origin is that one can find an interval of $\omega$ such all trajectories starting in an arbitrarily large distance from the origin converge towards arbitrarily small neighborhood of the origin. The formal definitions for sGPUAS and PES are given below.

\begin{defn}[{\cite{ref:labar-2019}}]
    The origin of \eqref{eq:fast-oscillation-ode} with parameter $\omega$ is semiglobally practically uniformly asymptotically stable (sGPUAS) if for every bounded neighborhood of the origin $\set{B}\subset\realnumbers^n$ and $\set{V}\subset\realnumbers^n$, there exist bounded neighborhoods of the origin $\set{Q}\subset\set{B}$ and $\set{W}$ such that $\set{V}\subset\set{W}\subset\realnumbers^n$ and a $\limitvalue{\omega}$ such that $\forall\omega\in(\limitvalue{\omega},\infty)$, there exists $T$ such that $\forall t_0\in\realnumbers_{>0}$ the following hold:
    \begin{enumerate}
        \item (Boundedness) $x(t_0)\in\set{V}\implies x(t)\in\set{W},\ \forall t \geq t_0$;
        \item (Stability) $x(t_0)\in\set{Q}\implies x(t)\in\set{B},\ \forall t \geq t_0$;
        \item (Practical Convergence) $x(t_0)\in\set{V}\implies x(t)\in\set{B}$ $\forall~t~\geq~t_0~+~T$
    \end{enumerate}
\end{defn}

\begin{defn}
    \label{def:deua}
    The origin of \eqref{eq:fast-oscillation-ode} is said to be practically exponentially stable (PES) for $\delta$ sufficiently small if there exists $M,\lambda>0$ such that for every $\varepsilon > 0$ there exists $\limitvalue{\omega}$ such that for all $\omega\in (\limitvalue{\omega},\infty)$ and $t\in[t_0,\infty)$
    \begin{equation}
        \label{eq:dlpes}
        \Vert x(t_0) \Vert < \delta \implies \Vert x(t) \Vert \leq M \Vert x(t_0) \Vert e^{-\lambda(t-t_0)} + \varepsilon
    \end{equation}
\end{defn}

Without dependence on the small parameter vector, the practical notion is removed and stability follows more conventional and stronger stability definitions found in \cite[Ch~4]{ref:khalil-2002}.

\section{Convexity Lemmas}
	\label{sec:convexity-lemmas}
	
The following lemmas and proposition are useful for establishing the first order conditions and second order conditions of convex and strictly convex functions.

\begin{lem}[\cite{ref:boyd-2004}]
	\label{lem:convexity-first-order-condition}
	A differentiable map is strictly convex if and only if for $\parameter_1,\ \parameter_2\in\realnumbers$ where $\parameter_1\neq\parameter_2$ the following inequality holds.
	\begin{equation}
		\costfunction(\parameter_2) > \costfunction(\parameter_1) + \costfunction'(\parameter_1)\cdot(\parameter_2 - \parameter_1)
	\end{equation}

\end{lem}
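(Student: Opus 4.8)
The plan is to treat this classical first-order characterization as a biconditional and prove each implication separately, using only the definition of strict convexity in \eqref{eq:strict-convexity} together with differentiability from Assumption~\ref{asmp:twice-differentiable}; no appeal to the second derivative is needed, which keeps the lemma available as a building block for the convexity results that follow.

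For the forward implication (strict convexity $\Rightarrow$ the first-order inequality), I would fix $\parameter_1\neq\parameter_2$, slide along the segment with the combination $\parameter_1+\lambda(\parameter_2-\parameter_1)$, $\lambda\in(0,1)$, and read \eqref{eq:strict-convexity} as
\begin{equation*}
\frac{\costfunction(\parameter_1+\lambda(\parameter_2-\parameter_1))-\costfunction(\parameter_1)}{\lambda} < \costfunction(\parameter_2)-\costfunction(\parameter_1).
\end{equation*}
Letting $\lambda\to 0^+$ collapses the left-hand difference quotient to the directional derivative $\costfunction'(\parameter_1)(\parameter_2-\parameter_1)$ and yields the non-strict bound $\costfunction(\parameter_2)\geq \costfunction(\parameter_1)+\costfunction'(\parameter_1)(\parameter_2-\parameter_1)$. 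To recover strictness I would apply this non-strict bound at the midpoint $\parameter_m=(\parameter_1+\parameter_2)/2$ and chain it against strict convexity evaluated at that same midpoint, $\costfunction(\parameter_m)<\tfrac12\costfunction(\parameter_1)+\tfrac12\costfunction(\parameter_2)$; rearranging the resulting chain produces the desired strict inequality.

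For the reverse implication (the first-order inequality $\Rightarrow$ strict convexity), I would fix $\parameter_1\neq\parameter_2$ and $\lambda\in(0,1)$, set $z=\lambda\parameter_1+(1-\lambda)\parameter_2$ (distinct from both endpoints), and apply the hypothesized inequality twice with base point $z$, once toward $\parameter_1$ and once toward $\parameter_2$. Forming the $\lambda$-weighted sum of the two inequalities makes the first-order terms cancel, since $\lambda(\parameter_1-z)+(1-\lambda)(\parameter_2-z)=0$, leaving precisely $\lambda\costfunction(\parameter_1)+(1-\lambda)\costfunction(\parameter_2)>\costfunction(z)$, i.e.\ \eqref{eq:strict-convexity}.

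The step I expect to be the main obstacle is the passage from the non-strict to the strict inequality in the forward direction: taking $\lambda\to 0^+$ only preserves a weak inequality, so strictness cannot be read off directly from the limit and must be restored by the auxiliary interior-point (midpoint) comparison above. The reverse direction is comparatively mechanical once the cancellation of the linear terms is noticed.
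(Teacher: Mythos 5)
Your proof is correct. Note, however, that the paper does not prove this lemma at all: it is stated as a cited textbook result from \cite{ref:boyd-2004}, so there is no in-paper argument to compare against. Your two implications are the standard ones --- the difference-quotient limit yielding the weak first-order bound, with strictness recovered by chaining against strict convexity at the midpoint, and the $\lambda$-weighted sum of the inequality at the interior point $z=\lambda\parameter_1+(1-\lambda)\parameter_2$ for the converse --- and you correctly identify and handle the one genuine subtlety, namely that passing to the limit $\lambda\to 0^{+}$ only preserves a non-strict inequality. The argument is complete and self-contained.
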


\begin{lem}[\cite{ref:boyd-2004}]
	\label{lem:convexity-second-order-condition}
	A twice differentiable map is convex if and only if 
	\begin{equation}
		\costfunction'' \geq 0,\quad\forall\ \parameter\in\realnumbers
	\end{equation}
\end{lem}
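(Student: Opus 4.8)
The plan is to prove the biconditional by treating the two implications separately, using the monotonicity of the first derivative $\costfunction'$ as the common bridge between the second-order condition $\costfunction''\geq 0$ and convexity. Neither direction needs the strict machinery of Lemma~\ref{lem:convexity-first-order-condition}; instead the non-strict analogue of monotone slopes does the work.

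For the sufficiency direction ($\costfunction''\geq 0 \Rightarrow$ convex), I would first note that $\costfunction''\geq 0$ on $\realnumbers$ makes $\costfunction'$ monotonically non-decreasing. Fix distinct $\parameter_1<\parameter_2$ and $\lambda\in(0,1)$, and set $\parameter_\lambda=\lambda\parameter_1+(1-\lambda)\parameter_2$. Applying the Mean Value Theorem separately on $[\parameter_1,\parameter_\lambda]$ and $[\parameter_\lambda,\parameter_2]$ produces points $c_1\in(\parameter_1,\parameter_\lambda)$ and $c_2\in(\parameter_\lambda,\parameter_2)$ with $\costfunction(\parameter_\lambda)-\costfunction(\parameter_1)=\costfunction'(c_1)(\parameter_\lambda-\parameter_1)$ and $\costfunction(\parameter_2)-\costfunction(\parameter_\lambda)=\costfunction'(c_2)(\parameter_2-\parameter_\lambda)$. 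Using $\parameter_\lambda-\parameter_1=(1-\lambda)(\parameter_2-\parameter_1)$ and $\parameter_2-\parameter_\lambda=\lambda(\parameter_2-\parameter_1)$, a short rearrangement gives
\[
\lambda\costfunction(\parameter_1)+(1-\lambda)\costfunction(\parameter_2)-\costfunction(\parameter_\lambda)=\lambda(1-\lambda)(\parameter_2-\parameter_1)\left(\costfunction'(c_2)-\costfunction'(c_1)\right),
\]
which is non-negative because $c_1<c_2$ forces $\costfunction'(c_1)\leq\costfunction'(c_2)$. This is exactly the (non-strict) convexity inequality.

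For the necessity direction (convex $\Rightarrow \costfunction''\geq 0$), the key step is to extract monotonicity of $\costfunction'$ from the definition of convexity. I would argue that convexity makes the secant slope $(\costfunction(y)-\costfunction(x))/(y-x)$ non-decreasing in each argument (the standard three-slopes property), so that for $x<y$ one obtains $\costfunction'(x)\leq(\costfunction(y)-\costfunction(x))/(y-x)\leq\costfunction'(y)$; hence $\costfunction'$ is non-decreasing. Then $\costfunction''(\parameter)=\lim_{h\to 0}(\costfunction'(\parameter+h)-\costfunction'(\parameter))/h\geq 0$, since for a non-decreasing $\costfunction'$ the difference quotient has non-negative numerator and denominator of matching sign for every $h$.

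The main obstacle is this necessity direction, specifically the conversion of the convexity inequality into monotonicity of the difference quotients, which is where the actual content of convexity enters. It must be carried out without appealing to continuity of $\costfunction''$, since under Assumption~\ref{asmp:twice-differentiable} the map is only twice differentiable and $\costfunction''$ need not be continuous; routing everything through monotonicity of $\costfunction'$ sidesteps any pointwise sign-versus-continuity pitfall, because the derivative of a non-decreasing function is non-negative wherever it exists. An equally viable alternative for the sufficiency direction would replace the Mean Value Theorem step with the Taylor expansion $\costfunction(y)=\costfunction(x)+\costfunction'(x)(y-x)+\tfrac12\costfunction''(\xi)(y-x)^2\geq\costfunction(x)+\costfunction'(x)(y-x)$, establishing the first-order condition and then averaging it at $x=\parameter_\lambda$ against $y=\parameter_1,\parameter_2$.
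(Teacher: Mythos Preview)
Your proof is correct. Both directions are carried out cleanly: the Mean Value Theorem computation in the sufficiency direction is algebraically sound, and the necessity direction correctly routes through the three-slopes inequality to obtain monotonicity of $\costfunction'$, from which non-negativity of the difference quotients (and hence of $\costfunction''$) follows wherever the second derivative exists. Your remark that continuity of $\costfunction''$ is not assumed and not needed is well taken.

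As for comparison with the paper: the paper does not supply its own proof of this lemma. It is stated with a citation to \cite{ref:boyd-2004} and invoked as a standard fact (principally inside the proof of Proposition~\ref{prop:strictly-positive-hessian-integral}). So your write-up is strictly more self-contained than what the paper offers. If anything, the paper only ever uses the necessity direction---a strictly convex map is in particular convex, hence $\costfunction''\geq 0$---so your sufficiency argument, while correct, goes beyond what the paper actually requires.
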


\begin{prop}
	\label{prop:strictly-positive-hessian-integral}
	A scalar, strictly convex map $\costfunction$ implies that $\forall\ \parameter_1,\parameter_2\in\realnumbers$ where $\parameter_1 < \parameter_2$, the following proper integral of the second derivative of the map is strictly positive.
	\begin{equation}
		\label{eq:proposition-strictly-positive-hessian-integral}
		\int_{\parameter_1}^{\parameter_2}\costfunction''(\phi)d\phi > 0
	\end{equation}
\end{prop}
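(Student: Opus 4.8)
The plan is to reduce the integral of the second derivative to a difference of first derivatives via the Fundamental Theorem of Calculus, and then to establish that the first derivative is strictly increasing as a direct consequence of strict convexity. Since $\costfunction$ is twice differentiable by Assumption~\ref{asmp:twice-differentiable}, the first derivative $\costfunction'$ is differentiable and hence continuous, so on the bounded interval $[\parameter_1,\parameter_2]$ the proper integral satisfies
\begin{equation}
	\int_{\parameter_1}^{\parameter_2}\costfunction''(\phi)d\phi = \costfunction'(\parameter_2) - \costfunction'(\parameter_1).
\end{equation}
It therefore suffices to prove the strict monotonicity $\costfunction'(\parameter_2) > \costfunction'(\parameter_1)$ whenever $\parameter_1 < \parameter_2$.

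To obtain strict monotonicity I would invoke the first-order condition for strict convexity in Lemma~\ref{lem:convexity-first-order-condition}, applied with the two points playing each role in turn, giving the pair
\begin{align}
	\costfunction(\parameter_2) &> \costfunction(\parameter_1) + \costfunction'(\parameter_1)\cdot(\parameter_2 - \parameter_1) \\
	\costfunction(\parameter_1) &> \costfunction(\parameter_2) + \costfunction'(\parameter_2)\cdot(\parameter_1 - \parameter_2).
\end{align}
Adding these two inequalities cancels the function values and leaves $0 > (\parameter_2 - \parameter_1)\left(\costfunction'(\parameter_1) - \costfunction'(\parameter_2)\right)$. Dividing by the strictly positive quantity $\parameter_2 - \parameter_1$ then yields $\costfunction'(\parameter_2) > \costfunction'(\parameter_1)$, which combined with the Fundamental Theorem of Calculus identity delivers the claimed strict positivity of the integral.

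I expect the main obstacle to be securing the \emph{strict} inequality rather than a merely non-strict one. The alternative route through the second-order condition of Lemma~\ref{lem:convexity-second-order-condition}, namely $\costfunction'' \geq 0$, only produces a non-strict bound; recovering strictness from it would require separately excluding the case $\costfunction'' \equiv 0$ on a subinterval (which would force $\costfunction$ to be affine there, contradicting strict convexity) and thus an extra case analysis. The first-order-condition route above sidesteps this entirely and produces the strict inequality in a single step, so I would prefer it. The only technical point warranting care is that Assumption~\ref{asmp:twice-differentiable} guarantees pointwise twice differentiability rather than continuity of $\costfunction''$; since the statement explicitly concerns a proper integral, I would take $\costfunction''$ to be integrable on the bounded interval so that the Fundamental Theorem of Calculus applies, and this is the sole place where an integrability hypothesis is implicitly used.
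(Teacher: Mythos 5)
Your proof is correct, but it takes a genuinely different route from the paper's. You argue directly: reduce the integral to $\costfunction'(\parameter_2)-\costfunction'(\parameter_1)$ by the Fundamental Theorem of Calculus and then obtain strict monotonicity of $\costfunction'$ by applying the first-order condition of Lemma~\ref{lem:convexity-first-order-condition} at both endpoints and adding. The paper instead argues by contradiction through the second-order condition of Lemma~\ref{lem:convexity-second-order-condition}: if the integral were nonpositive then, since $\costfunction''\geq 0$, the second derivative would vanish on the interval, forcing $\costfunction$ to be affine there and turning \eqref{eq:strict-convexity} into an equality --- precisely the ``extra case analysis'' you anticipated and chose to sidestep. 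Your version buys a slightly stronger intermediate fact (strict monotonicity of $\costfunction'$, which is what the integral identity really encodes) and avoids the contradiction framing; the paper's version stays closer to the second-order machinery it reuses elsewhere. Both arguments share the same mild regularity caveat you flag: Assumption~\ref{asmp:twice-differentiable} gives only pointwise existence of $\costfunction''$, so the identity $\int_{\parameter_1}^{\parameter_2}\costfunction''(\phi)d\phi=\costfunction'(\parameter_2)-\costfunction'(\parameter_1)$ (and, in the paper's proof, the passage from a vanishing integral of a nonnegative integrand to a pointwise-zero second derivative) implicitly requires integrability of $\costfunction''$; the proposition's reference to a \emph{proper} integral makes this a reasonable reading, and you are right to call it out as the only place the hypothesis is used.
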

\begin{proof}
	Suppose the proposition was not true, then $\costfunction$ is strictly convex and there is a finite interval where $\int_{\parameter_1}^{\parameter_2} \costfunction''(\phi) d\phi \leq 0$. From Lemma~\ref{lem:convexity-second-order-condition}, the integrand must be strictly non-negative so $\costfunction''(\parameter) = 0$ for all $\parameter\in(\parameter_1,\parameter_2)$, i.e.\ the function is a linear function on the interval. This means that $\costfunction$ cannot be strictly convex as \eqref{asmp:strictly-convex} would be an equality rather than the strict inequality, leading to the contradiction that $\costfunction$ was strictly convex. Therefore, \eqref{eq:proposition-strictly-positive-hessian-integral} must hold for any $\parameter_1,\parameter_2\in\realnumbers$ where $\parameter_1 < \parameter_2$.
\end{proof}

\end{document}